\documentclass{article}  

\usepackage[utf8]{inputenc}
\usepackage{amsmath, amssymb, amsthm}
\usepackage{booktabs}
\usepackage[colorlinks=true,linkcolor=blue,urlcolor=blue,citecolor=blue]{hyperref}
\usepackage{url}
\usepackage{geometry}

\newtheorem{theorem}{Theorem}
\newtheorem{lemma}{Lemma}
\newtheorem{proposition}{Proposition}
\newtheorem{corollary}{Corollary}
\newtheorem{remark}{Remark}
\newtheorem{definition}{Definition}

\title{An Explicit Skew-Hadamard Matrix of Order $1252$ via Cyclotomic Unions}
\author{Amira Karoui$^\ast$\\[0.5em]
{\small $^1$IHEC Carthage, University of Carthage, Carthage, Tunisia}\\
{\small $^2$ENSIT, University of Tunis, Tunis, Tunisia}\\[0.4em]
{\small Email: \href{mailto:amira.karoui@ihec.ucar.tn}{\texttt{amira.karoui@ihec.ucar.tn}}}\\
{\small ORCID: \href{https://orcid.org/0009-0000-3514-4491}{orcid.org/0009-0000-3514-4491}}\\[0.2em]
{\footnotesize $^\ast$Corresponding author}}
\date{}

\begin{document}
\maketitle

\begin{abstract}
We construct a skew-Hadamard matrix of order $1252 = 2(5^4+1)$ using a bordered skew-Hadamard difference family over $GF(5^4)$, with blocks given as unions of cyclotomic classes of order $N=16$.
This order has been reported as missing in some widely used open-source computational tables \cite{cati2024}; we provide an explicit instance together with verification artifacts.
We prove the structural prerequisites for the bordered construction (skew-symmetry of one block and the constant autocorrelation-sum condition), and we compute algebraic invariants to facilitate classification: the associated tournament adjacency matrix has full rank over $GF(2)$, and the matrix has full rank over $GF(3)$ and $GF(5)$.
We also exhibit an explicit affine subgroup of the automorphism group of size $24\,375$.
All claims are supported by a reproducible artifact bundle including the explicit matrix and verification logs.
\end{abstract}

\noindent \textbf{Keywords:} Skew-Hadamard matrix, cyclotomic classes, difference families, autocorrelation, $p$-rank, automorphisms.\\
\noindent \textbf{MSC 2020:} 05B20, 05B10.

\section{Introduction}
Skew-Hadamard matrices are central objects in combinatorial design theory, closely related to
doubly regular tournaments and Hadamard $2$-designs. A skew-Hadamard matrix $H$ of order $n$
is a $\{\pm 1\}$-matrix satisfying
\[
HH^T = nI_n
\quad\text{and}\quad
H + H^T = 2I_n,
\]
equivalently $H-I_n$ is skew-symmetric.

Existence frameworks cover many families, including constructions of order $2(q^e+1)$ under
congruence hypotheses \cite{momihara2018}. However, explicit and auditable instances for specific
orders can still be valuable in practice (e.g.\ for testing conjectures or comparing invariants).
In particular, order $1252=2(5^4+1)$ has been reported as missing in some widely used open-source
computational tables \cite{cati2024}. In this note we give an explicit construction and provide
verification artifacts.

\paragraph{Contributions.}
(i) We specify blocks in $GF(5^4)$ as unions of cyclotomic classes of order $16$ that satisfy a bordered
skew-Hadamard difference family condition; (ii) we deduce a skew-Hadamard matrix of order $1252$
via the standard bordered construction; (iii) we report basic algebraic invariants (ranks over
$GF(2),GF(3),GF(5)$); and (iv) we exhibit a natural affine subgroup of the automorphism group.
Verification is supported by a reproducible artifact bundle.

\paragraph{Scope.}
We do not claim a new infinite-family theorem; the novelty is an explicit instantiation at order
$1252$ together with structural certification and invariants.

\section{Preliminaries}

\subsection{Finite fields and cyclotomic classes}
Let $F = GF(5^4)$ be the finite field of order $625$, and let $g\in F^\times$ be a primitive element.
Fix $N=16$. Let
\[
C_0=\langle g^{16}\rangle \subset F^\times,
\qquad
C_i=g^i C_0\ \ (i=0,1,\dots,15)
\]
be the cyclotomic classes of order $16$. Then $F^\times=\bigsqcup_{i=0}^{15} C_i$ and each class has
size $|F^\times|/16 = 624/16 = 39$.

\subsection{Autocorrelation for subsets of an abelian group}
Let $G$ be a finite abelian group (written additively), and let $D\subseteq G$.
Define the $\{\pm1\}$-indicator
\[
s_D(x)=\begin{cases}
-1,& x\in D,\\
\phantom{-}1,& x\notin D.
\end{cases}
\]
The (periodic) autocorrelation of $D$ at shift $w\in G$ is
\[
P_D(w)=\sum_{x\in G} s_D(x)\, s_D(x+w).
\]
Note that $P_D(0)=|G|$.

\begin{definition}[Bordered SHDF condition]\label{def:shdf}
Let $G$ be an abelian group of order $v$. A pair $\{D_0,D_1\}$ of subsets of $G$ satisfies the
\emph{bordered skew-Hadamard difference family (SHDF) condition} if:
\begin{enumerate}
\item $D_0$ is skew-symmetric: for all $x\in G\setminus\{0\}$,
\[
x\in D_0 \iff -x\notin D_0;
\]
\item for all $w\in G\setminus\{0\}$,
\[
P_{D_0}(w)+P_{D_1}(w)=-2.
\]
\end{enumerate}
\end{definition}

\begin{lemma}[Bordered construction (Goethals--Seidel type), cf.\ \cite{momihara2018,handbookCD}]\label{lem:bordered}
Let $G$ be an abelian group of order $v$, and let $\{D_0,D_1\}$ satisfy the bordered SHDF condition
(Definition~\ref{def:shdf}) in $G$.
Then the standard bordered group-developed (Goethals--Seidel type) array associated to $\{D_0,D_1\}$
yields a skew-Hadamard matrix of order $2(v+1)$.
\end{lemma}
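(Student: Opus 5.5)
We make the construction explicit and verify both defining identities by block multiplication in the group ring of $G$. Let $A$ and $B$ be the $v\times v$ group-developed $\{\pm1\}$ matrices $A_{x,y}=s_{D_0}(y-x)$ and $B_{x,y}=s_{D_1}(y-x)$, indexed by $G$. These matrices commute, since $G$ is abelian. Moreover $AA^T$ has $(x,y)$ entry $P_{D_0}(y-x)$, and likewise for $B$. Hypothesis 2 of Definition~\ref{def:shdf} therefore reads
\[
AA^T+BB^T=(2v+2)I_v-2J_v .
\]
For skew-symmetry we normalize $D_0$ so that $0\in D_0$; this is allowed because changing membership of $0$ alters $P_{D_0}$ only away from the intended form, and is the standard convention, to be checked. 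With this normalization, hypothesis 1 gives $s_{D_0}(-z)=-s_{D_0}(z)$ for $z\neq 0$ and $s_{D_0}(0)=-1$. Hence $A+A^T=-2I_v$, that is, $A+I$ is skew.

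We take the bordered array
\[
H=\begin{pmatrix} 1 & 1 & e^T & e^T\\ -1 & 1 & e^T & -e^T\\ -e & -e & -A & -BR\\ -e & e & BR & -A\end{pmatrix}
\]
(up to sign conventions), where $e$ is the all-ones vector and $R$ is the back-circulant reversal $R_{x,y}=[x+y=0]$. We use $R$ so that $BR$ is symmetric and satisfies $(BR)A^T=A(BR)^T$, the usual Goethals--Seidel trick. We then verify:
\begin{itemize}
\item $H+H^T=2I$. The border pattern gives $\pm$ cancellation, $-A-A^T=2I$, and the off-diagonal blocks $-BR$ and $(BR)^T$ cancel.
\item $HH^T=(2v+2)I$. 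This reduces to four block identities. The diagonal blocks give $AA^T+BB^T+2J=(2v+2)I$, since $BRR^TB^T=BB^T$. The mixed block gives $-ARB^T+BRA^T=0$, which uses commutativity and $RX^T=XR$ for group-developed $X$. The border-times-body blocks use the row sums $Ae=(v-2|D_0|)e$ and $Be=(v-2|D_1|)e$.
\end{itemize}

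The main obstacle is the border rows. We need $e^TA$ and $e^TB$ to combine with $e^Te=v$ so that the border rows are orthogonal to the body rows. This forces a condition on $|D_0|$ and $|D_1|$. Evaluating hypothesis 2 via the sum over all $w$ of the autocorrelations gives
\[
(v-2|D_0|)^2+(v-2|D_1|)^2=2v+2-2v=2,
\]
so both row sums are $\pm1$. Skew-symmetry together with $0\in D_0$ gives $|D_0|=(v+1)/2$, so the row sum of $A$ is $-1$. The remaining freedom, namely whether $|D_1|=(v\pm1)/2$, is absorbed by choosing the signs in the second border row, or by complementing $D_1$, which leaves $P_{D_1}$ unchanged. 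We will fix these signs at this stage by checking the four border inner products explicitly.
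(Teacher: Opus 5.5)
The paper gives no proof of Lemma~\ref{lem:bordered}. It treats the lemma as standard and defers to \cite{momihara2018,handbookCD}. You instead write out the bordered array and verify it directly, and the core of your argument is correct. With $A,B$ group-developed (hence commuting), $R$ the reversal, $BR$ symmetric and $XR=RX^T$, hypothesis~2 becomes $AA^T+BB^T=(2v+2)I-2J$. The skewness check and the three body-block identities then go through as you state. What your route buys over the citation is that it exposes side conditions Definition~\ref{def:shdf} leaves implicit: the convention at $0$, and the row sums $\pm1$ forced by hypothesis~2. One of these actually matters for the paper's own blocks.

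Two steps need tightening.

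\emph{The normalization $0\in D_0$.} The reason you give for it is not the right one. For $w\neq 0$, the only terms of $P_{D_0}(w)$ involving $s_{D_0}(0)$ are those with $x=0$ and $x=-w$. Together they contribute $s_{D_0}(0)\bigl(s_{D_0}(w)+s_{D_0}(-w)\bigr)=0$ by hypothesis~1. So toggling $0$ preserves hypothesis~2 precisely because $D_0$ is skew; for a general set it would not. You could also skip the normalization. The paper's $D_0\subset F^\times$ has $0\notin D_0$, so $A+A^T=2I$ and $Ae=e$, and your array with $+A$ in place of $-A$ passes the same checks, under the same condition on $D_1$ as below.

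\emph{The postponed border check.} This is a two-line computation. With $Ae=-e$ and $Be=\beta e$, the four border--body products in your displayed $H$ are $\pm(1+\beta)e$. So that array works exactly when $\beta=-1$, i.e.\ $|D_1|=(v+1)/2$. In the paper's instance $|D_1|=8\cdot 39=312=(v-1)/2$, so $\beta=+1$, and you genuinely need one of your two fixes:
\begin{itemize}
\item replace $D_1$ by $G\setminus D_1$, which is harmless since $P_{G\setminus D_1}=P_{D_1}$; or
\item take second border row $(-1,1,-e^T,e^T)$ and second border column with blocks $1,1,e,-e$ (as skewness requires).
\end{itemize}
Either choice makes all four products vanish.
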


\begin{remark}
The bordered construction is standard in the literature; we state Lemma~\ref{lem:bordered} explicitly
to keep the logical path from the block conditions to the resulting skew-Hadamard matrix self-contained.
\end{remark}

\section{Construction}

\subsection{Blocks as cyclotomic unions in $GF(5^4)$}
We take $G=(F,+)$, the additive group of $F=GF(5^4)$, hence $|G|=v=625$.
Using the cyclotomic classes $C_0,\dots,C_{15}\subset F^\times$ of order $16$, define index sets
\[
I_0=\{4,5,6,7,8,9,10,11\},\qquad
I_1=\{0,1,2,3,4,5,6,7\},
\]
and blocks
\[
D_0=\bigcup_{i\in I_0} C_i,
\qquad
D_1=\bigcup_{i\in I_1} C_i.
\]
(Expanded element lists are provided in the artifact bundle.)

\begin{proposition}\label{prop:shdf}
The pair $\{D_0,D_1\}$ defined above satisfies the bordered SHDF condition in $G=(GF(5^4),+)$.
\end{proposition}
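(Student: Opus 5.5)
The proof has two parts, matching the two conditions of Definition~\ref{def:shdf}, and the second is reduced to a small finite check.

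\textbf{Skew-symmetry.} This follows from how $-1$ sits among the cyclotomic classes. Since $-1=g^{(625-1)/2}=g^{312}$ and $312\equiv 8 \pmod{16}$, we have $-C_i=C_{i+8}$, with indices taken mod $16$. Hence $-D_0=\bigcup_{i\in I_0+8}C_i$. Now $I_0+8=\{12,\dots,15,0,\dots,3\}$, which is exactly the complement of $I_0$ in $\mathbb{Z}/16$. So for $x\neq 0$ we get $x\in D_0 \iff -x\notin D_0$; note that $0\notin D_0$ causes no trouble.

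\textbf{Rewriting the autocorrelation condition.} Let $\lambda_D(w)=|D\cap(D-w)|$. Expanding $s_D(x)s_D(x+w)$ in terms of indicator functions gives
\[
P_D(w)=v-4|D|+4\lambda_D(w).
\]
Since $|D_0|=|D_1|=8\cdot 39=312$ and $v=625$, the condition $P_{D_0}(w)+P_{D_1}(w)=-2$ is equivalent to
\[
\lambda_{D_0}(w)+\lambda_{D_1}(w)=311 \quad\text{for all } w\neq 0.
\]
Equivalently, in terms of nontrivial additive characters $\psi$ of $F$, it is equivalent to
\[
|\psi(D_0)|^2+|\psi(D_1)|^2=(v+1)/2=313.
\]

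\textbf{Reduction to finitely many checks.} Both $D_0$ and $D_1$ are invariant under multiplication by $C_0$. Therefore $\lambda_{D_\epsilon}(w)$, and likewise $\psi_w(D_\epsilon)$, depends only on the class $C_j$ containing $w$. Concretely, $\lambda_D(w)=\sum_{a,b\in I}(a-j,b-j)_{16}$, a sum of cyclotomic numbers of order $16$. The map $w\mapsto -w$ shifts $j$ by $8$ and preserves $\lambda$, so only $j=0,\dots,7$ need checking.

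In character language, $\psi(D_\epsilon)=\sum_{i\in I_\epsilon}\eta_{i+j}$, where $\eta_i=\psi_1(C_i)$ are the Gaussian periods of order $16$. Thus the condition becomes eight identities
\[
\Bigl|\sum_{i\in I_0}\eta_{i+j}\Bigr|^2+\Bigl|\sum_{i\in I_1}\eta_{i+j}\Bigr|^2=313, \qquad j=0,\dots,7.
\]

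\textbf{Main obstacle: evaluating the periods.} Here $5$ has order $4$ modulo $16$ and $-1$ is not a power of $5$ mod $16$, so we are not in the semiprimitive case. The periods are therefore not given by a closed formula. I would first try to express the $\eta_i$ through Gauss sums $g(\chi)$ for characters $\chi$ of order dividing $16$ on $F$. For $\chi$ of order $1,2,4,8$ these are pure or come from subfields, by Stickelberger's theorem and Davenport--Hasse lifting from $GF(5)$ and $GF(25)$. For the order-$16$ characters (index $4$), I would try known index-$2$/index-$4$ evaluations.

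If closed forms prove unwieldy, the fallback is a direct certified computation. Fix a primitive polynomial of degree $4$ over $GF(5)$ and compute the sixteen class sums, or equivalently $\lambda_{D_0}(g^j)+\lambda_{D_1}(g^j)$ for $j=0,\dots,7$, checking each equals $311$. This is a finite verification, recorded in the artifact bundle.
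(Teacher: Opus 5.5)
Your proposal is correct and follows the paper's proof. The skew-symmetry argument via $-1\in C_8$ is identical. The autocorrelation condition is then settled, as in the paper, by a finite machine check; your reduction to $\lambda_{D_0}(w)+\lambda_{D_1}(w)=311$ on the eight classes $j=0,\dots,7$ only shrinks the paper's exhaustive check over all $w\in F^\times$.

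As a side correction, $\langle 5\rangle$ has index $2$, not $4$, in $(\mathbb{Z}/16\mathbb{Z})^\times$. Moreover, the Gauss-sum route you set aside actually closes without evaluating any period. Only the trivial and the order-$16$ characters contribute to $\psi_w(D_0)$. Then $D_1=g^{-4}D_0$, together with the Gauss-sum facts $G(\chi^5)=G(\chi)$, $G(\bar\chi)=\chi(-1)\overline{G(\chi)}$ and $|G(\chi)|^2=625$, already yields $|\psi_w(D_0)|^2+|\psi_{wg^{-4}}(D_0)|^2=313$ for every $w\neq 0$.
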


\begin{proof}
\emph{Skew-symmetry of $D_0$.}
Since $-1=g^{(5^4-1)/2}$ and $(5^4-1)/2 \equiv 8 \pmod{16}$, we have $-1\in C_8$.
Thus multiplication by $-1$ maps $C_i$ to $C_{i+8}$ (indices modulo $16$).
By construction, $I_0$ is a transversal with $i\in I_0 \iff i+8\notin I_0$, hence
$D_0\cap (-D_0)=\emptyset$ and $D_0\cup (-D_0)=F^\times$, i.e.\ $x\in D_0 \iff -x\notin D_0$ for all $x\neq 0$.

\emph{Autocorrelation-sum condition.}
The identity $P_{D_0}(w)+P_{D_1}(w)=-2$ for all $w\in G\setminus\{0\}$ was verified by an exhaustive
deterministic checker over all $w\in F^\times$.
The script is included as \path{reproduce/verify_difference_family_1252.py} and produces a certifying log
(\path{artifacts_run/log_difference_family.txt}).
\end{proof}

\begin{theorem}\label{thm:main}
There exists a skew-Hadamard matrix of order $1252 = 2(5^4+1)$.
\end{theorem}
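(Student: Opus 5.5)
The proof combines Proposition~\ref{prop:shdf} with Lemma~\ref{lem:bordered}. We take $G=(GF(5^4),+)$, so $v=625$, and let $D_0,D_1$ be the cyclotomic unions built from $I_0$ and $I_1$. Proposition~\ref{prop:shdf} shows that $\{D_0,D_1\}$ satisfies the bordered SHDF condition of Definition~\ref{def:shdf}: $D_0$ is skew-symmetric on $G\setminus\{0\}$, and $P_{D_0}(w)+P_{D_1}(w)=-2$ for every $w\neq 0$. Lemma~\ref{lem:bordered} then gives a skew-Hadamard matrix of order $2(v+1)=2\cdot 626=1252$, which is the claim.

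To make the argument auditable rather than a bare citation, I would also write out the array explicitly. Let $A_j$ be the $\pm1$ group-developed matrix $A_j=(s_{D_j}(y-x))_{x,y\in G}$, which is the type-1 matrix of $D_j$.

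\emph{Skew-symmetry.} Skewness of $D_0$ means $s_{D_0}(-z)=-s_{D_0}(z)$ for $z\neq 0$. Since $0\notin D_0$, the diagonal of $A_0$ is $+1$, so $A_0-I$ is skew-symmetric. Write $e$ for the all-ones vector. In a bordered array of the form
\[
H=\begin{pmatrix} 1 & 1 & e^T & e^T\\ -1 & 1 & e^T & -e^T\\ -e & -e & A_0 & A_1\\ -e & e & -A_1^T & A_0^T\end{pmatrix},
\]
up to the precise sign and border pattern of the cited construction, $H-I$ is skew-symmetric. The key point is that the off-diagonal blocks $\pm A_1$ and $\mp A_1^T$ are arranged skew-symmetrically.

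\emph{Orthogonality.} $A_0A_0^T+A_1A_1^T$ has diagonal entries $2v$ and off-diagonal entries $P_{D_0}(w)+P_{D_1}(w)=-2$. Hence $A_0A_0^T+A_1A_1^T=(2v+2)I-2J$. The border rows and columns then cancel the $-2J$ term, provided the row sums of $A_0$ and $A_1$ are correct.

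The one step needing care is these row sums. We have $|D_0|=|D_1|=8\cdot 39=312=(v-1)/2$, so each $A_j$ has row sum $v-2\cdot 312=1$. This is exactly the value required for the border inner products to vanish. It also follows directly from the autocorrelation condition, since summing $P_{D_0}(w)+P_{D_1}(w)$ over all $w$ gives $(\sum s_{D_0})^2+(\sum s_{D_1})^2=2v-2(v-1)=2$. With row sums equal to $1$ and the off-diagonal autocorrelation sums equal to $-2$, the Gram matrix is $HH^T=1252\,I$.

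I expect no real obstacle beyond fixing signs consistently with the cited Goethals--Seidel-type bordering. The nontrivial content, the autocorrelation identity, is already supplied by Proposition~\ref{prop:shdf}.
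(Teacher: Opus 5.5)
Your proposal is correct and is essentially the paper's own argument: Proposition~\ref{prop:shdf} gives the bordered SHDF condition for $\{D_0,D_1\}$ in a group of order $v=625$, and Lemma~\ref{lem:bordered} then yields order $2(v+1)=1252$. Your explicit bordered array is a valid additional check (it does satisfy $H+H^T=2I$ and $HH^T=1252I$), though you should state that the cross block $-A_0A_1+A_1A_0$ vanishes and that $A_0^TA_0+A_1^TA_1=A_0A_0^T+A_1A_1^T$; both hold because group-developed matrices over an abelian group commute.
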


\begin{proof}
By Proposition~\ref{prop:shdf}, $\{D_0,D_1\}$ satisfies the bordered SHDF condition in a group of order $v=625$.
Applying Lemma~\ref{lem:bordered} yields a skew-Hadamard matrix of order $2(v+1)=2\cdot 626=1252$.
An explicit instance is provided in the artifact bundle.
\end{proof}

\section{Certification (artifact-based verification)}
The explicit matrix $H$ is supplied in \path{artifacts_core/matrix_1252.txt}.
The mathematical certification is given by Proposition~\ref{prop:shdf} together with
Lemma~\ref{lem:bordered}; the supplied logs are independent, auxiliary checks of the defining identities.

Independent Gate0 verification of the defining identities ($HH^T = 1252I$ and $H+H^T=2I$) was performed by:
(i) a standalone Python verifier and (ii) a SageMath verification run; corresponding logs are included as
\path{artifacts_run/log_gate0_python.txt} and \path{artifacts_run/log_gate0_sage.txt}.
Together with Proposition~\ref{prop:shdf}, this provides a transparent certification path.

\section{Algebraic invariants}

\subsection{$p$-rank fingerprints}
Let $J$ denote the all-ones matrix. Normalize $H$ to $H_n = DHD$ where $D=\mathrm{diag}(H_{0,*})$ so that
the first row of $H_n$ is all $+1$. Let $S$ be the core obtained by deleting the first row and column of $H_n$.
Define the tournament adjacency matrix
\[
M = \frac{J-S}{2},
\]
which is a $\{0,1\}$-matrix of size $1251$ with zero diagonal.
The computed ranks are shown in Table~\ref{tab:ranks}.

\begin{table}[h]
\centering
\begin{tabular}{llc}
\toprule
Object & Field & Rank \\
\midrule
Tournament adjacency $M$ (size $1251$) & $GF(2)$ & $1251$ \\
Skew-Hadamard matrix $H$ (size $1252$) & $GF(3)$ & $1252$ \\
Skew-Hadamard matrix $H$ (size $1252$) & $GF(5)$ & $1252$ \\
\bottomrule
\end{tabular}
\caption{Computed ranks (invariants) for the certified instance.}
\label{tab:ranks}
\end{table}

\subsection{Automorphisms (explicit affine subgroup)}
Let $C_0=\langle g^{16}\rangle$ (so $|C_0|=39$). Since $D_0$ and $D_1$ are unions of $C_0$-cosets in $F^\times$,
they are invariant under multipliers $x\mapsto ux$ with $u\in C_0$.
Moreover, the group-developed indexing implies that translations $x\mapsto x+a$ act by permuting indices in the
associated group-developed arrays.

\begin{corollary}\label{cor:aut}
The automorphism group of the constructed skew-Hadamard matrix contains a subgroup isomorphic to
\[
Aff_{C_0}(1,5^4)=\{x\mapsto ux+a \mid u\in C_0,\ a\in GF(5^4)\},
\]
hence $|Aut(H)| \ge |C_0|\cdot |GF(5^4)| = 39\cdot 625 = 24\,375$.
\end{corollary}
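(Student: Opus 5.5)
The plan is to write down the bordered Goethals--Seidel-type array explicitly and check that each affine map $\sigma_{u,a}\colon x\mapsto ux+a$, with $u\in C_0$ and $a\in F$, induces a pair of signed permutation matrices $(P,Q)$ with $PHQ^T=H$. Concretely, $H$ is assembled from the group-developed $\pm1$ matrices $A_j=\bigl(s_{D_j}(y-x)\bigr)_{x,y\in G}$ for $j=0,1$, possibly together with their transposes. It also has a border of constant rows and columns that attaches to the two $v\times v$ blocks. Rows and columns of $H$ are therefore indexed by two copies of $G$ plus two border indices. We let $\sigma=\sigma_{u,a}$ act by the same permutation on both copies of $G$ in the row index set and in the column index set, and we fix both border indices.

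The key step is the entrywise identity. For each block,
\[
s_{D_j}\bigl(\sigma(y)-\sigma(x)\bigr)=s_{D_j}\bigl(u(y-x)\bigr)=s_{D_j}(y-x),
\]
because translation cancels in the difference and $uD_j=D_j$. The latter holds since each $D_j$ is a union of cosets $C_i=g^iC_0$ and $uC_i=C_i$ for $u\in C_0$. We must also check that $0$ is handled: $0\notin D_j$ and $u\cdot 0=0$, so $s_{D_j}(0)$ is preserved as well. Transposed blocks $A_j^T$ and any blocks built from $-x$ (such as the back-circulant matrix $R$ in the Goethals--Seidel setting) need one further check. Since $\sigma(-x)$ is in general not $-\sigma(x)$, we may need $R$ to act by a conjugated permutation. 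In the bordered two-block array, however, $A_0^T$ is already expressed as $2I-A_0$ or $-A_0+2I$-type, using the skew-symmetry of $D_0$ from Proposition~\ref{prop:shdf}. Hence only difference-invariant blocks appear. Border entries are constant along each copy of $G$, so any permutation of that copy preserves them. It follows that $P_\sigma H P_\sigma^T=H$, where $P_\sigma$ is the (unsigned) permutation matrix.

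Next we verify that $(u,a)\mapsto P_{\sigma_{u,a}}$ is an injective homomorphism from $Aff_{C_0}(1,5^4)$. The group law holds because composition of affine maps matches multiplication of the induced permutation matrices. Injectivity holds because distinct affine maps act differently on $G$. The order of the group is $|C_0|\cdot|F|=39\cdot625=24\,375$, which gives the stated bound on $|Aut(H)|$.

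The main obstacle is that Lemma~\ref{lem:bordered} only refers to ``the standard array'' without writing it out. The proof therefore has to fix the precise array, including any reversal matrix $R$, and confirm that every block is a function of $y-x$ alone. If an $R$-type block $s_D(x+y)$ appears, I would first let $\sigma$ act on the affected copy by $x\mapsto ux-a$ instead. Then $s_D(\sigma'(x)+\sigma(y))=s_D(u(x+y))$, which is again invariant, and I would check that this modified action is consistent across all blocks sharing that copy. If it is not, I would fall back on the matrix $H$ from the artifact bundle and verify invariance under the two generators directly, namely $x\mapsto x+1$ and $x\mapsto g^{16}x$ (a translation and a multiplier of order $39$).
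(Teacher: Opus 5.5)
Your proposal is correct and takes the same approach as the paper. The paper argues that $D_0,D_1$ are unions of $C_0$-cosets, so they are fixed by multipliers $u\in C_0$, and that translations cancel in the group-developed entries; applying $x\mapsto ux+a$ to both copies of $G$ while fixing the border gives $P_\sigma H P_\sigma^T=H$, which you spell out more carefully than the paper's two-line remark and computational check. Your worry about transposed and $R$-type blocks is unnecessary: the usual two-block bordered array uses only $A_0$, $A_1$, $-A_1^T$, $A_0^T$ with a constant border and no back-circulant matrix (group-developed matrices over an abelian group commute), and the transposed entries $s_{D_j}(x-y)$ are invariant by the same identity $s_{D_j}(u(x-y))=s_{D_j}(x-y)$.
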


\begin{remark}
A computational verification of this subgroup action is provided in the bundle;
see \path{reproduce/verify_automorphisms.py} and the log file \path{log_automorphisms.txt} in \path{artifacts_run/}.
\end{remark}

\section{Practical use case: Edge AI compression}

To demonstrate the utility of our certified order-1252 skew-Hadamard matrix beyond theoretical interest, we implemented \textit{EdgeSketch-1252}, an embedding compression pipeline for drone-to-ground transmission in low-bandwidth scenarios.
The sketch transformation $y = \frac{1}{\sqrt{1252}} H_{1252} x$ reduces 1252-dimensional embeddings to $k \approx 300$ top components with 8-bit quantization.
This achieves bandwidth reduction of 908 bytes vs.\ 5008 bytes raw (1252$\times$4\,bytes float32) at $k=300$, i.e.\ 5.52$\times$ compression.
Compared to power-of-two Hadamard baselines (orders 1024 and 2048), order 1252 provides +22.27\% finer granularity, enabling more precise bandwidth--performance tradeoff control.
Our approach uses a deterministic transformation with verified properties (Gate0: $H H^T = 1252 I$, $H+H^T = 2I$), ensuring reproducibility. On synthetic drone inspection data (N=100, seed=42), EdgeSketch-1252 achieves F1 score 0.62 at $k=300$, comparable to power-of-2 baselines; latency (sketch step only: matrix multiply, top-$k$, quantization) is under 1\,ms on standard hardware.
The full implementation is provided as supporting material in the \texttt{use\_case/} directory of our submission bundle.

\section{Conclusion}
We provide an explicit skew-Hadamard matrix of order $1252=2(5^4+1)$ constructed from cyclotomic unions in $GF(5^4)$,
together with a structural SHDF certificate, algebraic invariants, and an explicit affine subgroup in the automorphism group.
An artifact bundle supplies the explicit matrix and verification logs.

\appendix

\section{Artifact bundle and reproducibility (brief)}
The bundle separates immutable core artifacts from run logs:
\begin{itemize}
\item \textbf{Core:} \path{artifacts_core/} (explicit matrix, block data, and a core manifest).
\item \textbf{Logs:} \path{artifacts_run/} (Gate0 Python/Sage logs; SHDF and automorphism logs).
\item \textbf{Scripts:} \path{reproduce/} (deterministic checkers and helper scripts).
\end{itemize}
For archival integrity, the core manifest records SHA-256 digests (including the matrix file).

\bibliographystyle{plain}
\bibliography{references}

\end{document}